\newcommand{\cc}{\ensuremath{\mathbb{C}}}
\newcommand{\dual}{\ensuremath{^{*}}}
\newcommand{\h}[1]{\-\mbox{-#1}}
\newcommand{\II}{\ensuremath{\mathcal{I}}}
\newcommand{\OO}{\ensuremath{\mathcal{O}}}
\newcommand{\pp}{\ensuremath{\mathbb{P}}}
\newcommand{\zz}{\ensuremath{\mathbb{Z}}}
\newcommand{\CC}{\ensuremath{\mathbb{C}}}
\newtheorem{te}{Theorem}[section]
\newtheorem*{te*}{Theorem}
\newtheorem*{tecat*}{Teorema}
\newtheorem{p}[te]{Proposition}
\newtheorem*{p*}{Proposition}
\newtheorem*{pcat*}{Proposició}
\newtheorem*{co*}{Corollary}
\newtheorem*{cocat*}{Coro{\lgem}ari}
\newtheorem{lem}[te]{Lemma}
\newtheorem*{lem*}{Lemma}
\theoremstyle{definition}
\newtheorem*{pr*}{Problem}
\newtheorem*{prcat*}{Problema}
\newtheorem{ob}[te]{Remark}
\newtheorem{que}[te]{Question}
\title[Splitting criteria on the isotropic Grassmannian]{Splitting criteria for vector bundles on the symplectic isotropic Grassmannian}
\subjclass{14M17, 14J60, 14F05}
\keywords{vector bundles, splitting criteria, isotropic Grassmannian}
\author{Pedro Macias Marques$^{1}$ and Luke Oeding$^{2}$}
\address{$^{1}$Departamento de Matemática \\ Universidade de \'Evora \\ Rua Rom\~{a}o Ramalho, 59
\\ 7000--671 \'Evora, Portugal
}
\address{$^{1}$Departament d'\`Algebra i Geometria \\ Facultat de Matem\`atiques \\ Universitat de Barcelona \\ Gran Via de les Corts Catalanes, 585 \\ 08007 Barcelona, Espanya }
\email{pmm@uevora.pt}
\thanks{$^{1}$Partially supported by Funda\c{c}\~ao para a Ci\^encia e Tecnologia, under grant SFRH/BD/27929/2006, and by CIMA -- Centro de Investiga\c{c}\~ao  em Matem\'atica e Aplica\c{c}\~oes, Universidade de \'Evora.}
\address{$^{2}$Dipartimento di Matematica ``U. Dini'' \\ Universit\`a degli Studi di Firenze\\ Viale Morgagni 67/A\\ 50134 Firenze, Italy }
\email{oeding@math.unifi.it}
\thanks{$^{2}$Partially supported by NSF International Research Fellowship Program Grant \#08538000}
\date{2010/03/15}
\begin{document}

\maketitle

\begin{abstract}
We extend a theorem of Ottaviani on cohomological splitting criterion for vector bundles over the Grassmannian to the case of the symplectic isotropic Grassmanian. We find necessary and sufficient conditions for the case of the Grassmanian of symplectic isotropic lines.  For the general case the generalization of Ottaviani's conditions are sufficient for vector bundles over the symplectic isotropic Grassmannian. By a calculation in the program LiE, we find that Ottaviani's conditions are necessary for Lagrangian Grassmannian of isotropic $k$-planes for $k\leq 6$, but they fail to be necessary for the case of the Lagrangian Grassmannian of isotropic $7$-planes. Finally, we find a related set of necessary and sufficient splitting criteria for the Lagrangian Grassmannian.
\end{abstract}

\tableofcontents

\section{Introduction}

Splitting criteria for vector bundles dates from the sixties, when Horrocks presented the criterion for vector bundles over $\pp^n$ \cite{Hor64}. It is a very important tool to study classification of vector bundles, for instance. Some progress has been done since then. In 1981 Evans and Griffiths gave a slightly simpler criterion for vector bundles over $\pp^n$ with rank ${r\le n}$ \cite{EG81}. Ottaviani made a contibution to this problem in 1989 by presenting splitting criteria for vector bundles over Grassmannians and quadrics \cite{Ott89}. In 2000 Kumar and Rao gave a different criterion for rank~$2$ vector bundles over $\pp^n$, for ${n\ge4}$. In 2003 an improvement to Horrocks criteria was obtained by Kumar, Paterson and Rao \cite{KPR03} for vector bundles of rank ${r<n}$, if $n$ is even, and ${r<n-1}$, if $n$ is odd. In 2005 Costa and Mir\'o\h{Roig} extended Horrocks criterion to multiprojective spaces and to smooth projective varieties with the weak CM property \cite{CaMR05}. Malaspina recently generalized these reusults \cite{Mal08} and improved Ottaviani's result on quadrics \cite{Mal09}.

In this paper we make a contibution to extend Ottaviani's ideas to the symplectic isotropic Grassmannian $IG(k,n)$ (\emph{i.e.} the Grassmannian of projective $k$-planes in projective $n$-dimensional space which are isotropic for a non\h{degenerate} symplectic form, herein called the \emph{isotropic Grassmannian} or \emph{Lagrangian Grassmannian} when $n=2k+1$). In particular, we answer Question \ref{quest}, a version of which was posed at P.R.A.G.MAT.I.C. 2009, which, in short, was to attempt to generalize
Ottaviani's splitting conditions \cite{Ott89} to the case of the isotropic Grassmannian. Specifically, we give sufficient splitting conditions for a vector bundle over ${IG(k,n)}$ (Proposition~\ref{sufficient}). For the case of lines, we show that these sufficient conditions on ${IG(1,n)}$ are also necessary. For ${LG(k):=IG(k,2k+1)}$ and for the first instances of $k$, i.e.\ for ${1\le k\le6}$, we show that these conditions are necessary as well (Theorem~\ref{k6}). However, we show by a counter\h{example} that they fail to be necessary for ${k=7}$. This suggests that a different set of conditions must be explored, which is what we do in section~\ref{anotherset}, finding a splitting criterion for $LG(k)$.

In more detail, in Proposition~\ref{sufficient} we use Ottaviani's proof \cite[Theorem~2.1]{Ott89}, with a slight modification, explained in Remark~\ref{globsectquo}. Ottaviani states that a vector bundle on the Grassmannian $G(k,n)$ splits if and only if
\[H^i\left(G(k,n),\Lambda^{j_k}{Q}\dual\otimes\cdots\otimes
	\Lambda^{j_1}{Q}\dual\otimes E(t)\right)=0,\]
for all ${t\in\zz}$, ${0\le j_1,\ldots,j_k\le n-k}$, and ${0<i<(k+1)(n-k)=\dim\big(G(k,n)\big)}$, where~$Q$ is the quotient bundle on the Grassmannian. A naïve conjecture of a set of splitting conditions for the Lagrangian Grassmannian would be 
\[H^i\left(LG(k),\Lambda^{j_k}{Q}\dual\otimes\cdots\otimes
	\Lambda^{j_1}{Q}\dual\otimes E(t)\right)=0,\]
for all ${t\in\zz}$, ${0<i<\dim\big(LG(k)\big)}$ and ${0\le j_1,\ldots,j_k\le n-k}$,
where~$Q$ is the quotient bundle on the  Lagrangian Grassmannian. These conditions fail for $LG(2)$, because $H^1(LG(2),Q^*\otimes Q^*)$ contains $H^1(LG(2),\Omega^1)$ which is nonzero, and therefore a different set of conditions must be considered.
The idea to improve these condition is to relate $\sum j_k$ with the order $i$ of the cohomology groups.

The proof of Proposition~\ref{sufficient} goes by induction on~$k$ and uses a global section of the quotient bundle~$Q$ over ${IG(k,n)}$ and its Koszul complex to relate splitting conditions on ${IG(k,n)}$ to the ones on ${IG(k-1,n-2)}$. For the converse (with ${1\le k\le6}$), we decompose the bundle using the Pieri formula and then for each irreducible summand, we use Bott's algorithm and the Borel\h{Weyl}\h{Bott} theorem for computing cohomology of irreducible homogeneous vector bundles over homogeneous varieties. We used the computer program LiE to perform both algorithms (see Section \ref{cohomology}). Finally, in section~\ref{anotherset}, working with a different set of conditions, and using the methods of Proposition~\ref{sufficient} and of Section~\ref{cohomology}, we find a splitting criterion for~$LG(k)$.

\section{Splitting criterion on the isotropic Grassmannian}

Let $V$ be a complex vector space of dimension $n+1$, with $n$ odd, and let $\omega$ be a non\h{degenerate} symplectic form on~$V$. For each subspace $F$ of $V$ define
\[F^\bot:=\{v\in V:\omega(v,w)=0\text{ for all }w\in F\}.\]
Let $\pp^n:=\pp(V)$ and let $Gr(k,n)$ be the Grassmannian of \mbox{$k$-planes} in $\pp^n$. Consider the isotropic Grassmannian
\[IG(k,n):=\left\{F\in Gr(k,n):F\subseteq F^\bot\right\}.\]
For every $k\ge0$ and for every odd $n>2k$, this variety is non\h{empty} and we have 
\[\dim IG(k,n)=\tfrac{1}{2}(k+1)(2n-3k).\]
When ${n=2k+1}$, \mbox{$k$-planes} in $\pp^n$ correspond to half\h{dimensional} vector subspaces of~$V$, and isotropic subspaces are Lagrangian, so we call ${IG(k,2k+1)}$ the Lagrangian Grassmannian and denote it by ${LG(k)}$.

\begin{ob}\label{globsectquo}
Let $Q$ be the quotient bundle on $IG(k,n)$ and let $s$ 
be a non\h{zero} global section of $Q$. Then there is a vector ${v\in V}$ such that ${s(F)=(F,\overline{v})}$ for all ${F\in IG(k,n)}$, where $\overline{v}$ is the class of~$v$ in the fibre $\tfrac{V}{F}$. Therefore we get ${s(F)=0}$ if and only if ${v\in F}$. Since $\omega$ is non\h{degenerate}, there is a vector ${v'\in V}$ such that ${\omega\left(v,v'\right)\ne0}$. Now $\dim\langle v\rangle^\bot=\dim V-1=n$. Therefore $V$ admits a base $v,v_1,\ldots,v_{n-1},v'$, with $\langle v\rangle^\bot=\langle v,v_1,\ldots,v_{n-1}\rangle$. Note that since~$\omega$ is skew\h{symmetric}, ${\omega(v,v)=0}$, and therefore ${v\in\langle v\rangle^\bot}$. If $F$ is in the zero locus of~$s$, there is a subspace~$F'$ of~$V$ such that ${F=\langle v\rangle\oplus F'}$. Furthermore, since ${F\subseteq F^\bot}$, we get that $F'$ can be chosen as a subspace of $\langle v_1,\ldots,v_{n-1}\rangle$, and this gives us an isomorphism between ${IG(k-1,n-2)}$ and the zero locus~$Z$ of~$s$.

Finally, observe that a fiber of $Q$ in a point ${F\in Z}$ admits a decomposition
\[\frac{V}{F}=\frac{\langle v\rangle^\bot\oplus\langle v'\rangle}{\langle v\rangle\oplus F'}
	\cong\frac{\langle v\rangle^\bot}{\langle v\rangle\oplus F'}\oplus\langle v'\rangle,\]
where $F'$ and $v'$ are as above. From here, we get that
\[{{Q}_{|Z}\cong \tilde{Q}\oplus\OO_Z},\]
where $\tilde{Q}$ is the quotient bundle on ${IG(k-1,n-2)}$.
\end{ob}

The following lemma is a consequence from the previous remark.
\begin{lem}\label{restriction}
Let $Q$ (respectively $\tilde{Q}$) be the quotient bundle on $IG(k,n)$ (respectively $Z = IG(k-1,n-2)$) as above. Then
\[
\left(\bigwedge^{p}Q\right)_{|Z} 
\cong
\left( \bigwedge^{p} \tilde{Q} \right) \oplus \left(\bigwedge^{p-1} \tilde{Q}\right)
,\]
for all $1\leq p \leq n-k$.
\end{lem}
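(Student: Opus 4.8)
The plan is to deduce this from the splitting $Q_{|Z}\cong\tilde Q\oplus\OO_Z$ established in Remark~\ref{globsectquo}, together with the standard fact that exterior powers convert direct sums into sums of tensor products of exterior powers. Concretely, for any two vector bundles $A$ and $B$ one has the natural isomorphism
\[
\bigwedge\nolimits^{p}(A\oplus B)\;\cong\;\bigoplus_{a+b=p}\left(\bigwedge\nolimits^{a}A\right)\otimes\left(\bigwedge\nolimits^{b}B\right).
\]
First I would apply this with $A=\tilde Q$ and $B=\OO_Z$ a line bundle (indeed the trivial line bundle). The key simplification is that $\bigwedge^{b}\OO_Z=0$ for $b\ge2$, since $\OO_Z$ has rank~$1$, so only the terms $b=0$ and $b=1$ survive. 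The $b=0$ term gives $\bigwedge^{p}\tilde Q\otimes\OO_Z\cong\bigwedge^{p}\tilde Q$, and the $b=1$ term gives $\bigwedge^{p-1}\tilde Q\otimes\OO_Z\cong\bigwedge^{p-1}\tilde Q$, using $M\otimes\OO_Z\cong M$ for any $\OO_Z$-module $M$. Adding these yields exactly the claimed formula.

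There are essentially no obstacles here; the proof is a two-line application of a functorial identity, and the only thing to be slightly careful about is the range of~$p$. The hypothesis $1\le p\le n-k$ ensures that $p$ does not exceed the rank of $Q$, so that $\bigwedge^{p}Q$ is the genuinely interesting (nonzero, nontrivial) exterior power; for $p$ in this range both $\bigwedge^{p}\tilde Q$ and $\bigwedge^{p-1}\tilde Q$ make sense as well, since $\tilde Q$ has rank $n-k$ as a bundle on $IG(k-1,n-2)$ — one checks that the quotient bundle on $IG(k-1,n-2)$ has rank $(n-2)-(k-1)=n-k-1$, wait, so in fact one should note $\bigwedge^{p}\tilde Q$ may vanish at the top end, but the stated isomorphism remains correct with the convention that exterior powers above the rank are zero. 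I would simply remark that the identity is an equality of bundles on $Z$ and holds with this convention, so no case distinction is needed.

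In writing this up I would: (i) recall the exterior-power-of-a-direct-sum decomposition and cite it as standard; (ii) invoke Remark~\ref{globsectquo} to restrict $\bigwedge^{p}Q$ to $Z$ and commute the restriction past the exterior power, using that $(\,\cdot\,)_{|Z}$ is a tensor (pullback) functor and hence preserves exterior powers, i.e.\ $\big(\bigwedge^{p}Q\big)_{|Z}\cong\bigwedge^{p}\big(Q_{|Z}\big)$; (iii) substitute $Q_{|Z}\cong\tilde Q\oplus\OO_Z$ and expand; (iv) discard the terms with $b\ge2$ and absorb the trivial tensor factors. The only computational content — identifying which summands survive — is immediate from $\operatorname{rk}\OO_Z=1$.
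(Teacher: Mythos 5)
Your proof is correct and follows exactly the route the paper intends: the lemma is stated there as ``a consequence from the previous remark,'' meaning one applies the direct-sum decomposition $Q_{|Z}\cong\tilde Q\oplus\OO_Z$ from Remark~\ref{globsectquo} together with the standard isomorphism $\bigwedge^p(A\oplus L)\cong\bigwedge^p A\oplus(\bigwedge^{p-1}A\otimes L)$ for a line bundle $L$. Your self-correction on the rank of $\tilde Q$ (it is $n-k-1$, so $\bigwedge^{n-k}\tilde Q=0$) is right, and the formula still holds at $p=n-k$ with the usual convention that exterior powers above the rank vanish.
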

%

Because we will use it several times, we include for reference the following lemma which appears in \cite{Ott89}
\begin{lem}[Lemma 1.1(i) \cite{Ott89}]
Let
\[
\xymatrix{0\ar[r]&A_{n} \ar[r]&\ldots \ar[r]&A_{0}\ar[r]& B \ar[r]& 0}
\]
be an exact sequence of sheaves on a variety $X$, let $r$ be an integer $\geq 0$.
If $H^{r+i}(X,A_{i})=0$ for $i =0,\ldots,n$ then $H^{r}(X,B)=0$.
\end{lem}

In order to strengthen our splitting conditions, we will require the following.
Let $Q_{q}$ denote the quotient bundle on $IG(k,n)$, for each $1\leq q \leq k$.  There is only one quotient bundle on $IG(k,n)$, but we use the parameter $q$ as a placeholder so that we know when each bundle occurs in our proof. 

Let $\bigwedge^{j_{q}}Q_{q}$ denote the exterior power for $0\leq j_{q} \leq n-2k+q -1$. For notational convenience we will write $\bigwedge^{j_{q}}Q_{q}$ only as a placeholder in the case $j_{q} = n-2k+q$, but in this case, we actually replace $\bigwedge^{n-2k+q}Q_{q}$ with a line bundle.
The reason for this notation is that in our proof,  at the $q^{th}$ step of induction we will use the fact that when $rank(Q_{q}) = n-2k+q$, $\bigwedge^{n-2k+q}Q_{q}$ is a line bundle on $IG(q,n-2k+2q)$.  With this notation, our conditions are easier to state because the degree $i$ of cohomology always depends on the $j_{q}$'s by the same expression.  If we were to not use this notational convenience, we would have several different expressions for the ranges of the index $i$, each depending on the values of the $j_{q}$.

\begin{p}[Sufficient splitting criterion]\label{sufficient}
Let ${n\ge3}$ be an odd number and let $E$ be a vector bundle on the isotropic Grassmannian $IG(k,n)$ such that
\[H^i\left(IG(k,n),\bigwedge^{j_k}{Q_{k}}\dual\otimes\cdots\otimes
	\bigwedge^{j_1}{Q_{1}}\dual\otimes E(t)\right)=0,\]
for all $t\in\zz$, $i>0$ and $j_1,\ldots,j_k$ such that ${0\le j_q\le n-2k+q}$, (with the convention that $\bigwedge^{n-2k+q}Q_{q}$ is replaced by a line bundle for each $q$) and  
\[\sum_{q=1}^k j_q\le i<\sum_{q=1}^kj_q+n-2k.\]
Then $E$ splits as a sum of line bundles.
\end{p}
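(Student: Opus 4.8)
The plan is to induct on $k$. The base case $k=0$ is the classical Horrocks-type criterion on $\pp^n$ (or rather on $IG(0,n)=\pp^n$), which Ottaviani's argument already handles: vanishing of $H^i\big(\pp^n, E(t)\big)$ for all $t$ and $0<i<n$ forces $E$ to split. For the inductive step, assume the statement holds for $IG(k-1,n-2)$ and suppose $E$ is a vector bundle on $IG(k,n)$ satisfying the stated hypotheses. Following Ottaviani's method, I would choose a non-zero global section $s$ of the quotient bundle $Q = Q_k$ and consider its Koszul complex
\[
\xymatrix{0\ar[r]& \bigwedge^{n-2k+k}Q\dual \ar[r]& \cdots \ar[r]& \bigwedge^{1}Q\dual \ar[r]& \OO_{IG(k,n)} \ar[r]& \OO_Z \ar[r]& 0,}
\]
where $Z\cong IG(k-1,n-2)$ is the zero locus of $s$, as described in Remark~\ref{globsectquo}. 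Tensoring this Koszul complex with $\bigwedge^{j_{k-1}}Q_{k-1}\dual\otimes\cdots\otimes\bigwedge^{j_1}Q_1\dual\otimes E(t)$ and applying Lemma~[Lemma 1.1(i) \cite{Ott89}], the hypothesis that the relevant cohomology groups on $IG(k,n)$ vanish in the prescribed range of $i$ — precisely where the shift in homological degree along the Koszul complex is accounted for by the extra terms $j_k$ in the index inequality $\sum j_q \le i < \sum j_q + n-2k$ — will yield the vanishing of
\[
H^i\Big(Z, \big(\bigwedge^{j_{k-1}}Q_{k-1}\dual\otimes\cdots\otimes\bigwedge^{j_1}Q_1\dual\otimes E(t)\big)_{|Z}\Big) = 0
\]
for $i$ in the range $\sum_{q=1}^{k-1} j_q \le i < \sum_{q=1}^{k-1} j_q + (n-2)-2(k-1) = \sum_{q=1}^{k-1} j_q + n-2k$.

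Here is where Lemma~\ref{restriction} does the essential bookkeeping: restricting each factor $\bigwedge^{j_q}Q_q$ to $Z$ produces, by that lemma, a direct sum $\bigwedge^{j_q}\tilde Q \oplus \bigwedge^{j_q-1}\tilde Q$, and the line-bundle convention at the top exterior power is exactly what makes this uniform. Expanding the tensor product of these restricted factors, one obtains a direct sum of terms of the form $\bigwedge^{j'_{k-1}}\tilde Q\dual\otimes\cdots\otimes\bigwedge^{j'_1}\tilde Q\dual\otimes E_{|Z}(t)$ with $j'_q\in\{j_q, j_q-1\}$, and $\sum j'_q$ ranges over an interval; one checks that the range of $i$ obtained above covers exactly the interval $[\sum j'_q,\ \sum j'_q + n-2k)$ demanded by the inductive hypothesis applied to $IG(k-1,n-2)$, whose dimension-type bound is $(n-2)-2(k-1)=n-2k$. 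Hence $E_{|Z}$ satisfies the hypotheses of the proposition on $IG(k-1,n-2)$, so by induction $E_{|Z}$ splits as a sum of line bundles on $Z$.

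Finally, to descend from the splitting of $E_{|Z}$ to the splitting of $E$ itself, I would argue as Ottaviani does: the $q=k$ hypotheses with $j_k$ ranging over $0,\dots,n-2k$ (taking the remaining $j_q=0$) give $H^i\big(IG(k,n),\bigwedge^{j_k}Q\dual\otimes E(t)\big)=0$ for $j_k\le i< j_k+n-2k$; feeding these into the Koszul complex of $s$ tensored with $E(t)$ shows $H^i\big(Z,E_{|Z}(t)\big)=0$ for $0<i<n-2k=\dim Z$ as well — but more to the point, one uses that $E$ is filtered by $\OO_{IG(k,n)}$-modules whose restrictions to $Z$ are the line-bundle summands, and then a cohomological argument (induction on rank, using the vanishing hypotheses to split the relevant extension classes in $\Ext^1$) reconstructs a global splitting of $E$ over $IG(k,n)$. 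I expect the main obstacle to be precisely this last descent step — verifying that the vanishing data we have is enough to kill the extension classes and that the sub-line-bundles on $Z$ lift to $IG(k,n)$ — together with the careful verification that the index interval $[\sum j_q,\ \sum j_q+n-2k)$ is propagated correctly through the Koszul spectral sequence and Lemma~\ref{restriction}, since an off-by-one error there would break the induction. The slight modification over Ottaviani's original argument, needed because $Q_{|Z}$ acquires an extra trivial summand (Remark~\ref{globsectquo}), is exactly what Lemma~\ref{restriction} is designed to absorb.
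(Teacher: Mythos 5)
Your overall architecture — induction on $k$, the Koszul complex of a section $s$ of $Q_k$ with zero locus $Z\cong IG(k-1,n-2)$, Lemma~1.1 of Ottaviani to descend cohomology vanishing to $Z$, Lemma~\ref{restriction} to pass from $Q_{|Z}$ to $\tilde Q$, and the inductive hypothesis to split $E_{|Z}$ — is exactly the paper's argument, and the bookkeeping you do with $j'_q\in\{j_q,j_q-1\}$ is a sound (if slightly more laborious) way of phrasing what the paper handles by simply observing that $\bigwedge^{j_{k-1}}\tilde Q^*\otimes\cdots\otimes\bigwedge^{j_1}\tilde Q^*\otimes E_{|Z}(t)$ is a \emph{direct summand} of $\bigwedge^{j_{k-1}}Q^*\otimes\cdots\otimes\bigwedge^{j_1}Q^*\otimes E(t)_{|Z}$, so vanishing of the latter forces vanishing of the former.

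The genuine gap is in your final descent step. You propose a filtration of $E$ by $\OO_{IG(k,n)}$-modules with line-bundle restrictions to $Z$ and then an induction on rank to kill $\Ext^1$ classes, and you flag this as the main obstacle. That is not how the paper proceeds, and I don't see how to make your sketch work without substantially more input. The paper's argument is direct and avoids any filtration of $E$: once $E_{|Z}$ splits, take $F$ to be the direct sum of line bundles on the \emph{ambient} variety $IG(k,n)$ whose restriction to $Z$ equals $E_{|Z}$, and let $\alpha_0\in H^0\bigl(Z,(F^*\otimes E)_{|Z}\bigr)$ be the isomorphism. Tensoring $0\to\II_Z\to\OO_{IG(k,n)}\to\OO_Z\to 0$ by $F^*\otimes E$, the obstruction to lifting $\alpha_0$ to $\alpha\in H^0(IG(k,n),F^*\otimes E)$ lives in $H^1(\II_Z\otimes F^*\otimes E)$. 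One kills this group by running the Koszul complex of $s$ one more time, now tensored with $F^*\otimes E$: the hypothesis with $j_1=\cdots=j_{k-1}=0$ and $i=j_k$ gives precisely $H^{j_k}\bigl(\bigwedge^{j_k}Q_k^*\otimes E(t)\bigr)=0$ for all $t$, which via Lemma~1.1 of Ottaviani yields $H^1(\II_Z\otimes F^*\otimes E)=0$. Then $\det\alpha\in H^0(\OO_{IG(k,n)})\cong\cc$ is a constant, nonzero because it is nonzero on $Z$, so $\alpha$ is an isomorphism and $E\cong F$ splits. The intermediate vanishing $H^i(Z,E_{|Z}(t))=0$ you derive is true but not the point; what matters is the $H^1(\II_Z\otimes F^*\otimes E)=0$ vanishing on the \emph{ambient} space, which you never reach.
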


\begin{proof}
This proof is analogous to Ottaviani's proof for the regular Grassmannian case \cite{Ott89}. We proceed by induction on~$k$. For ${k=0}$, we have
\[{IG(0,n)\cong G(0,n)\cong\pp^n}.\]
Therefore the condition in the theorem amounts to saying that $E(t)$ has no intermediate cohomology for all $t\in\zz$. By Horrocks criterion \cite[chapter~I, Theorem~2.3.1]{OSS80}, $E$ splits.

Let ${k>0}$ and assume the proposition holds for ${k-1}$. Let $E$ be a vector bundle on ${IG(k,n)}$ satisfying the conditions of the proposition. Let $s$ be a global section of $Q_{k}$. By Remark~\ref{globsectquo}, its zero locus~$Z$ is isomorphic to ${IG(k-1,n-2)}$. We wish to use the hypothesis of induction on~$E_{|Z}$, and for that we will prove the vanishing
\[H^i\left(Z,\bigwedge^{j_{k-1}}{\tilde{Q}_{k-1}}\dual\otimes\cdots\otimes
	\bigwedge^{j_{1}}{\tilde{Q}_{1}}\dual\otimes E_{|Z}(t)\right)=0,\]
for the corresponding values of $i$, $j_1,\ldots,j_{k-1}$, where $\tilde{Q}$ denotes the quotient bundle on $IG(k-1,n-2)$.

Let ${t\in\zz}$ and let ${j_1,\ldots,j_{k-1}}$ be such that
\[0\le j_q\le n-2k+q,\]
for ${1\le q\le k-1}$. Denote
\[{A_j:=\bigwedge^{j-1}{Q_{k}}\dual\otimes\bigwedge^{j_{k-1}}{Q_{k-1}}\dual\otimes\cdots\otimes
	\bigwedge^{j_{1}}{Q_{1}}\dual\otimes E(t)},\]
for ${1\le j\le n-k+1}$. Consider the Koszul complex of~$s$
\begin{multline*}
\xymatrix{0\ar[r]&{\bigwedge^{n-k}{Q_{k}}\dual}\ar[r]&{\bigwedge^{n-k-1}{Q_{k}}\dual}
		\ar[r]&{\cdots}}\\
	\xymatrix{\cdots\ar[r]&\bigwedge^2{Q_{k}}\dual\ar[r]&{Q_{k}}\dual
		\ar[r]&\OO_{IG(k,n)}\ar[r]&\OO_Z\ar[r]&0.}
\end{multline*}
and tensor it by ${\bigwedge^{j_{k-1}}{Q_{k-1}}\dual\otimes\cdots\otimes\bigwedge^{j_{1}}{Q_{1}}\dual\otimes E(t)}$:
\begin{multline*}
\xymatrix{0\ar[r]&A_{n-k+1}\ar[r]&A_{n-k}\ar[r]&\cdots\ar[r]&A_{2}\ar[r]&}\\
	\xymatrix{\ar[r]&A_{1}\ar[r]&\bigwedge^{j_{k-1}}{Q_{k-1}}\dual\otimes\cdots\otimes\bigwedge^{j_{1}}{Q_{1}}\dual\otimes E(t)_{|Z}\ar[r]&0}
\end{multline*}

Let $i$ be such that
\[\sum_{q=1}^{k-1}j_q\le i<\sum_{q=1}^{k-1}j_q+(n-2)-2(k-1).\]
Note that ${(n-2)-2(k-1)=n-2k}$. We will apply Lemma~1.1 in \cite{Ott89} to this exact sequence. Assume ${1\le j\le n-k+1}$, and denote $j_{k} = j-1$.
By our hypothesis on~$E$, we get
\begin{multline*}
H^{i+j-1}\big(IG(k,n),A_j\big)=\\
	=H^{i+j_{k}}\left(IG(k,n),\bigwedge^{j_{k}}{Q_{k}}\dual\otimes \bigwedge^{j_{k-1}}{Q_{k-1}}\dual \otimes\cdots\otimes
		\bigwedge^{j_1}{Q_{1}}\dual\otimes E(t)\right)=0,
\end{multline*}
and since we have ${0\le j_k\le n-k}$ the bound on $i$ is
\[\sum_{q=1}^{k}j_q\le i+j_{k}<\sum_{q=1}^{k}j_q+n-2k.\]
Therefore we get
\begin{equation}\label{star}
H^i\left(Z,\bigwedge^{j_{k-1}}{Q_{k-1}}\dual\otimes\cdots\otimes
		\bigwedge^{j_{1}}{Q_{1}}\dual\otimes E(t)_{|Z}\right)=0.
		\end{equation}
By Lemma \ref{restriction}, we know that if $Q$ is the quotient bundle on $IG(k,n)$ and $\tilde{Q}$ is a quotient bundle on $IG(k-1,n-2)$, then
\[\bigwedge^{j_{k-1}}{\tilde{Q}_{k-1}}\dual\otimes\cdots\otimes
	\bigwedge^{j_{1}}{\tilde{Q}_{1}}\dual\otimes E_{|Z}(t)\]
is a summand of
\[\bigwedge^{j_{k-1}}{Q_{k-1}}\dual\otimes\cdots\otimes
		\bigwedge^{j_{1}}{Q_{1}}\dual\otimes E(t)_{|Z}\]
and hence the vanishing of the cohomology above implies
\[H^i\left(Z,\bigwedge^{j_{k-1}}{\tilde{Q}_{k-1}}\dual\otimes\cdots\otimes
	\bigwedge^{j_{1}}{\tilde{Q}_{1}}\dual\otimes E_{|Z}(t)\right)=0.\]
	
By the induction hypothesis, $E_{|Z}$ splits. We can therefore consider a splitting bundle~$F$ on ${IG(k,n)}$ and an isomorphism $\xymatrix@1{\alpha_0:F_{|Z}\ar[r]&E_{|Z}}$, with \[{\alpha_0\in H^0\big(Z,(F\dual\otimes E)_{|Z}\big)}.\] We wish to extend this isomorphism to a morphism ${\alpha\in H^0\big(IG(k,n),F\dual\otimes E\big)}$. Now tensor the exact sequence
\[\xymatrix{0\ar[r]&\II_Z\ar[r]&\OO_{IG(k,n)}\ar[r]&\OO_Z\ar[r]&0}\]
by ${F\dual\otimes E}$ to get
\[\xymatrix{0\ar[r]&\II_Z\otimes F\dual\otimes E\ar[r]&F\dual\otimes E\ar[r]&(F\dual\otimes E)_{|Z}\ar[r]&0.}\]
By our hypothesis on~$E$ and~$F$, the large cohomology sequence gives us
\begin{multline*}
\xymatrix{0\ar[r]&H^0\left(\II_Z\otimes F\dual\otimes E\right)\ar[r]&}\\
	\xymatrix@C-2pt{H^0\left(F\dual\otimes E\right)\ar[r]&H^0\left((F\dual\otimes E)_{|Z}\right)
		\ar[r]&H^1\left(\II_Z\otimes F\dual\otimes E\right)\ar[r]&0.}
\end{multline*}
To show that $\alpha_{0}$ lifts to a morphism ${\alpha\in H^0\big(IG(k,n),F\dual\otimes E\big)}$, we will show that the map $\xymatrix@1{H^0\left(F\dual\otimes E\right)\ar[r]&H^0\left((F\dual\otimes E)_{|Z}\right)}$
is surjective by showing 
\[H^1\left(\II_Z\otimes F\dual\otimes E\right) =0.\]

Consider again the Koszul complex of~$s$, this time ending in $\II_Z$,
\begin{multline*}
\xymatrix{0\ar[r]&{\bigwedge^{n-k}{Q_{k}}\dual}\ar[r]&{\bigwedge^{n-k-1}{Q_{k}}\dual}
		\ar[r]&{\cdots}}\\
	\xymatrix{\cdots\ar[r]&\bigwedge^2{Q_{k}}\dual\ar[r]&{Q_{k}}\dual
		\ar[r]&\II_Z\ar[r]&0.}
\end{multline*}
and tensor it by ${F\dual\otimes E}$:
\begin{multline*}
\xymatrix{0\ar[r]&{\bigwedge^{n-k}{Q_{k}}\dual\otimes F\dual\otimes E}
	\ar[r]&{\bigwedge^{n-k-1}{Q_{k}}\dual\otimes F\dual\otimes E}
		\ar[r]&{\cdots}}\\
	\xymatrix{\cdots\ar[r]&{Q_{k}}\dual\otimes F\dual\otimes E
		\ar[r]&\II_Z\otimes F\dual\otimes E\ar[r]&0.}
\end{multline*}
Our hypotheses on~$E$ include the condition that 
\[
H^{j_{k}}\left(\bigwedge^{j_{k}}{Q_{k}}\dual \otimes E(t)\right) =0
\]
so again using Lemma~1.1 in \cite{Ott89} we have
\[H^1\left(\II_Z\otimes F\dual\otimes E\right)=0\]
Therefore the morphism $\xymatrix@1{H^0\left(F\dual\otimes E\right)\ar[r]&H^0\left((F\dual\otimes E)_{|Z}\right)}$ is surjective, and we have ${\alpha\in H^0\big(IG(k,n),F\dual\otimes E\big)}$ such that ${\alpha_{|Z}=\alpha_0}$. 

Now consider $\xymatrix@1{\det\alpha:\det F\ar[r]&\det E}$, where 
\begin{multline*}
\det\alpha\in H^0\big((\det F)\dual\otimes\det E\big)\cong\\
	\cong H^0\big(\OO_{IG(k,n)}(c_1E-c_1F)\big)=H^0\left(\OO_{IG(k,n)}\right)\cong\cc.
\end{multline*}
We conclude that $\alpha$ is a constant. Since it is non\h{zero} on~$Z$, it is non\h{zero} on all ${IG(k,n)}$, and hence an isomorphism. 
\end{proof}

\begin{p}[Splitting criterion on $IG(1,n)$.]\label{lines}
Let ${n\ge3}$ be an odd number and let $E$ be a vector bundle on the isotropic Grassmannian $IG(1,n)$. Then $E$ splits as a sum of line bundles if and only if
\[H^i\left(IG(k,n),\bigwedge^jQ\dual\otimes E(t)\right)=0,\]
for all $t\in\zz$ and all $i,j$ such that ${0<i<\dim IG(1,n)}$ and ${0\le j<n-1}$.
\end{p}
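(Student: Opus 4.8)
The plan is to prove both implications, observing that the ``only if'' direction is immediate while the ``if'' direction needs an analysis of the index ranges.

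For the ``only if'' direction, suppose $E$ splits as a sum of line bundles $\bigoplus_\ell \OO_{IG(1,n)}(a_\ell)$. Then $\bigwedge^j Q\dual \otimes E(t)$ is a direct sum of bundles of the form $\bigwedge^j Q\dual(a_\ell + t)$, which are irreducible homogeneous bundles on $IG(1,n)$. So I would invoke the Borel--Weil--Bott theorem, together with a direct inspection of the relevant weights (or equivalently Bott's algorithm, as used later in the paper), to conclude that for $0 \le j < n-1$ the bundle $\bigwedge^j Q\dual$ together with any twist has cohomology concentrated in a single degree, and that for $0 < j < n-1$ this degree is either $0$ or $\dim IG(1,n)$, while for $j = 0$ one recovers the fact that $\OO(t)$ on $IG(1,n)$ — which is a quadric-like rational homogeneous space — has no intermediate cohomology. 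Hence all the intermediate cohomology groups $H^i$ with $0 < i < \dim IG(1,n)$ vanish, as claimed.

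For the ``if'' direction, the key point is to show that the hypothesis of Proposition~\ref{lines} \emph{implies} the hypothesis of Proposition~\ref{sufficient} for $k=1$, whence $E$ splits. For $k=1$ the conditions of Proposition~\ref{sufficient} read: $H^i(IG(1,n), \bigwedge^{j_1}Q_1\dual \otimes E(t)) = 0$ for all $t$, all $i>0$, all $0 \le j_1 \le n-2$ (with $\bigwedge^{n-2}Q_1\dual$ replaced by a line bundle, since $\mathrm{rank}\,Q = n-1$ so $\bigwedge^{n-2}Q\dual \cong \bigwedge^{n-1}Q \otimes (\det Q\dual)$ is a twist of a line bundle), and $j_1 \le i < j_1 + (n-2)$. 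So I need: for each fixed $j_1$ in this range and each twist, the cohomology $H^i$ vanishes for $i$ in the window $[j_1, j_1 + n - 3]$. The hypothesis of Proposition~\ref{lines} gives vanishing of $H^i$ for $0 < i < \dim IG(1,n)$ and $0 \le j < n-1$; the only potential gaps are $i = 0$ and $i \ge \dim IG(1,n)$. The plan is to rule these out: when $j_1 \ge 1$ the window $[j_1, j_1+n-3]$ does not contain $0$, and I must check it stays below $\dim IG(1,n) = 2(n-2)$ — indeed the largest value of $i$ in any window is $(n-2) + (n-3) = 2n - 5 < 2n - 4 = \dim IG(1,n)$, so the window lies strictly inside $(0, \dim IG(1,n))$ and the hypothesis applies directly; when $j_1 = 0$ the window is $[0, n-3]$, which includes $i = 0$, but by Borel--Weil--Bott on $IG(1,n)$ the sheaf $E(t)$ (a sum of line bundles, once we know — wait, we do not yet know $E$ splits). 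So for $j_1 = 0$ I instead argue that $H^0$ is not required: the $q=1$, $j_1 = 0$ instance of Proposition~\ref{sufficient} with $i=0$ is vacuous there too, since Proposition~\ref{sufficient} only asks $i > 0$. Thus in every case the needed vanishing is a special case of the Proposition~\ref{lines} hypothesis, possibly after discarding the $i=0$ instances which are not demanded.

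I expect the main obstacle to be the bookkeeping around the boundary cases of the index windows — in particular making sure that for $j_1$ near $n-2$ the top of the window $j_1 + n - 3$ never reaches $\dim IG(1,n)$, and handling the reindexing caused by the ``$\bigwedge^{n-2}Q\dual$ is a line bundle'' convention, so that the condition $0 \le j < n-1$ in Proposition~\ref{lines} matches up with $0 \le j_1 \le n-2$ in Proposition~\ref{sufficient} after the top exterior power is replaced by a twist. Once the two hypotheses are matched, the ``if'' direction follows immediately from Proposition~\ref{sufficient}, and no further geometric input is needed.
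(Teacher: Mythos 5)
Your two directions diverge from the paper in asymmetric ways, so let me address them separately.

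The ``if'' direction (hypothesis of the proposition implies hypothesis of Proposition~\ref{sufficient}) is the same strategy as the paper's, though your bookkeeping has two off-by-one errors worth flagging. For $k=1$ the range in Proposition~\ref{sufficient} is $0\le j_1\le n-2k+q = n-1$, not $n-2$; the convention replaces $\bigwedge^{n-1}Q_1\dual$ (the genuine top exterior power $\det Q\dual$, since $\operatorname{rank} Q = n-1$) by a line bundle, whereas your claim that $\bigwedge^{n-2}Q\dual$ is a twist of a line bundle is incorrect, since it has rank $n-1$. Also $\dim IG(1,n) = \tfrac12\cdot 2\cdot(2n-3) = 2n-3$, not $2n-4$. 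Neither error breaks the argument---the windows $[j_1, j_1+n-3]$ still sit inside $(0, 2n-3)$, and the $j_1=n-1$ case reduces to the $j=0$ case of the hypothesis once $\det Q\dual$ is absorbed into the twist---but you should fix them. In essence this direction of your proposal coincides with the one-line observation in the paper that since $\bigwedge^{n-1}Q\dual \cong \OO(t')$, the hypotheses of Proposition~\ref{sufficient} are supplied.

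The ``only if'' direction (splitting implies vanishing) is where you genuinely depart from the paper, and this is where there is a real gap. You propose applying Borel--Weil--Bott directly to $\bigwedge^j Q\dual(t)$ on $IG(1,n)$, invoking the fact that cohomology of an irreducible homogeneous bundle sits in a single degree. But for $n>3$ the quotient bundle $Q$ on $IG(1,n)$ is \emph{not} irreducible as a homogeneous bundle: the tautological fiber $V/F$ carries the two-step $P$-module filtration coming from $F \subset F^\perp \subset V$, with graded pieces $F^\perp/F$ and $V/F^\perp \cong F\dual$, and the extension is non-split (the nilradical of the parabolic acts non-trivially). Hence $\bigwedge^j Q\dual(t)$ is a filtered, reducible (though indecomposable) homogeneous bundle, and Borel--Weil--Bott gives no a priori guarantee that its cohomology is concentrated in one degree---you would have to decompose the associated graded into irreducible summands, compute each, and run a spectral-sequence argument. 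That can in principle be done, but it is a substantive extra step that your proposal elides. The paper avoids this entirely by using the fact that $IG(1,n)$ is the hyperplane section of $Gr(1,n)$ cut out by $\omega$: twisting $0 \to \OO_{Gr(1,n)}(-1) \to \OO_{Gr(1,n)} \to \OO_{IG(1,n)} \to 0$ by $\bigwedge^j Q\dual(t)$ and using Ottaviani's vanishing on $Gr(1,n)$ (where $Q$ \emph{is} irreducible and the BWB argument is clean) immediately kills the intermediate cohomology on $IG(1,n)$. I would recommend replacing your direct BWB sketch with this hyperplane-section reduction, or else filling in the decomposition-plus-spectral-sequence argument your route actually requires.
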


\begin{proof}
Note that since 
${\bigwedge^{n-1}Q\dual\cong\OO(t')}$ for some ${t'\in\zz}$, the vanishing required in Proposition~\ref{sufficient} is guaranteed. Therefore every vector bundle~$E$ on~$IG(1,n)$ satisfying the conditions of this proposition splits.

For the converse, note that $IG(1,n)$ is a hyperplane section of $Gr(1,n)$. Specifically, it is the hyperplane section given by $\omega = 0$. Therefore, we can consider the exact sequence
\[\xymatrix{0\ar[r]&\OO_{Gr(1,n)}(-1)\ar[r]&\OO_{Gr(1,n)}\ar[r]&
	\OO_{IG(1,n)}\ar[r]&0}\]
and twist it by ${\bigwedge^jQ\dual(t)}$ to get
\[\xymatrix{0\ar[r]&\bigwedge^jQ\dual(t-1)\ar[r]&\bigwedge^jQ\dual(t)
	\ar[r]&\big(\bigwedge^jQ\dual(t)\big)_{|IG(1,n)}\ar[r]&0.}\]
For any ${0<i<\dim IG(1,n)}$, the long cohomology sequence associated to this small exact sequence has the following terms
\begin{multline*}
\xymatrix{\cdots\ar[r]&H^i\big(\bigwedge^jQ\dual(t)\big)
		\ar[r]&H^i\left(\big(\bigwedge^jQ\dual(t)\big)_{|IG(1,n)}\right)
		\ar[r]&}\\
	\xymatrix{\ar[r]&H^{i+1}\big(\bigwedge^jQ\dual(t-1)\big)\ar[r]&\cdots}
\end{multline*}
Since all intermediate cohomology of ${\bigwedge^jQ\dual(t)}$ on the Grassmannian vanishes \cite{Ott89}, we get ${H^i\big(\bigwedge^jQ\dual(t)\big)=H^{i+1}\big(\bigwedge^jQ\dual(t-1)\big)=0}$, we get 
\[{H^i\left(\left(\bigwedge^jQ\dual(t)\right)_{|IG(1,n)}\right)=0}\]
\end{proof}

Now we focus our attention on the Lagrangian Grassmannian, where $n = 2k+1$.
The index ranges for the sufficient conditions in Theorem \ref{sufficient} are all $i,j_{1},\dots,j_{k}$ such that
${0\le j_q\le n-2k+q}$ and
\[\sum_{q=1}^kj_q\le i<\sum_{q=1}^kj_q+n-2k.\]
When $n = 2k+1$ we only have
${0\le j_q\le q+1}$ and
\[\sum_{q=1}^kj_q\le i<\sum_{q=1}^kj_q+1, \Rightarrow i=\sum_{q=1}^kj_q\]
We ask if these sufficient conditions are also necessary:

\begin{que}\label{quest}
Prove or disprove:
Let $k\geq 1$ and let $E$ be a vector bundle on the Lagrangian Grassmanian $LG(k)$. 
Let $Q_{q}$ denote the quotient bundle on $LG(k)$, for each $1\leq q \leq k$, and let $\bigwedge^{j_{q}}Q_{q}$ denote the exterior power for $0\leq j_{q} \leq q$. 
For notational convenience we use $\bigwedge^{j_{q}}Q_{q}$ as a placeholder in the case $j_{q} = q+1$, but in this case, we replace $\bigwedge^{q+1}Q_{q}$ with a line bundle.

Then $E$ splits as a sum of line bundles if and only if
\[H^i\left(\bigwedge^{j_{k}}{Q_{k}}\dual\otimes\cdots
	\otimes\bigwedge^{j_1}{Q_{1}}\dual\otimes E(t)\right)=0,\]
for all $t\in\zz$ and all $i$, $j_1, \ldots, j_k$ such that ${0\le j_q\le q+1}$ and
\[i=\sum_{q=1}^kj_q.\]
\end{que}

In order to verify cases of Question \ref{quest}, we assume $E$ splits and we need to calculate cohomology of the bundles  $\bigwedge^{j_1} Q^* \otimes \dots \otimes \bigwedge^{j_k}Q^*$.  For this we need to do two standard calculations.  First we decompose the bundle using the Pieri formula (see \cite{Fulton-Harris} for a detailed account).  Then for each decomposable summand, we use Bott's algorithm and the Borel-Weil-Bott Theorem (see \cite{Baston-Eastwood} or \cite{Fulton-Harris} for a detailed account) for computing cohomology of irreducible homogeneous vector bundles over homogeneous varieties.  These algorithms are both straightforward to perform in the computer program LiE.  Our code may be obtained by contacting the authors.  Here we state the results of these computations, while a more detailed account can be found in Section \ref{cohomology}

We found that Question \ref{quest} is valid for $k=1\dots 6$, but found several counterexamples for $k=7$. One such counterexample is 
\[
 H^{24}\left(L(7),\bigwedge^6 Q^* \otimes \bigwedge^5 Q^* \otimes \bigwedge^4 Q^* \otimes \bigwedge^3 Q^* \otimes \bigwedge^3 Q^* \otimes \bigwedge^2 Q^* \otimes Q^*(-9)\right) = \CC
 \]
 thus violating the conditions of Question \ref{quest}. 
We believe that Theorem \ref{sufficient} may be improved so that a finer version of Question \ref{quest} could be valid, and leave this for further study.

For completeness, we state the following
\begin{te}\label{k6}
Let $1\leq k \leq 6$ and let $E$ be a vector bundle on the Lagrangian Grassmanian $LG(k)$.
Let $Q_{q}$ denote the quotient bundle on $LG(k)$, for each $1\leq q \leq k$, and let $\bigwedge^{j_{q}}Q_{q}$ denote the exterior power for $0\leq j_{q} \leq q$. 
For notational convenience we use $\bigwedge^{j_{q}}Q_{q}$ as a placeholder in the case $j_{q} = q+1$, but in this case, we replace $\bigwedge^{q+1}Q_{q}$ with a line bundle.

Then $E$ splits as a sum of line bundles if and only if
\[H^i\left(\bigwedge^{j_{k}}{Q_{k}}\dual\otimes\cdots
	\otimes\bigwedge^{j_1}{Q_{1}}\dual\otimes E(t)\right)=0,\]
for all $t\in\zz$ and all $i,j_1 , \ldots ,j_k$ such that ${0\le j_q\le q+1}$ and
\[i=\sum_{q=1}^kj_q.\]
\end{te}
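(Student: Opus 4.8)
The plan is to prove Theorem~\ref{k6} in two directions, exploiting that the ``only if'' direction already follows from Proposition~\ref{sufficient}. First, observe that the index ranges appearing in Theorem~\ref{k6} are precisely the specialization to $n = 2k+1$ of the ranges in Proposition~\ref{sufficient}: there $0 \le j_q \le n-2k+q = q+1$, and the inequality $\sum j_q \le i < \sum j_q + n - 2k = \sum j_q + 1$ collapses to the single value $i = \sum_{q=1}^k j_q$. Hence the vanishing conditions stated in Theorem~\ref{k6} are exactly the hypotheses of Proposition~\ref{sufficient} in the Lagrangian case, and the ``if'' direction (these conditions imply $E$ splits) is an immediate corollary of that proposition. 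Only the converse requires new work: assuming $E$ splits as a sum of line bundles, we must verify that all the listed cohomology groups vanish for $1 \le k \le 6$.

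For the converse it suffices, by additivity of cohomology over direct sums and by absorbing the twist $(t)$ into the line-bundle summands, to show that for every $k \le 6$ and every choice of exponents $0 \le j_q \le q+1$ one has
\[
H^{\,\sum_q j_q}\!\left(LG(k),\,\bigwedge^{j_k} Q^* \otimes \cdots \otimes \bigwedge^{j_1} Q^* \otimes \mathcal{O}(t)\right) = 0
\]
for all $t \in \zz$. The strategy here is purely computational and representation-theoretic. I would first use the Pieri formula to decompose each tensor product $\bigwedge^{j_1} Q^* \otimes \cdots \otimes \bigwedge^{j_k} Q^*$ into a direct sum of irreducible homogeneous bundles $E_\lambda$ on $LG(k)$ indexed by partitions (dominant weights for the Levi of the relevant parabolic in $Sp(2k+2)$). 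Then, for each irreducible summand twisted by $\mathcal{O}(t)$, I would run Bott's algorithm: write the associated weight, add the half-sum of positive roots $\rho$, and either find that the weight is singular (in which case all cohomology vanishes) or find the unique Weyl group element making it dominant, which then pins down the single degree in which cohomology is nonzero and the irreducible $Sp(2k+2)$-module realizing it. The content of the theorem is that, after this bookkeeping, whenever a summand does contribute cohomology in some degree $i$, that degree is \emph{never} equal to $\sum_q j_q$ for the relevant range of $j_q$ — equivalently, the sum rule $i = \sum j_q$ is always obstructed. Because the number of partitions, twists $t$, and Weyl elements grows quickly, this is carried out mechanically in LiE; the write-up records the outcome and, for a representative case (e.g.\ $k=2$, explaining why the na\"ive conditions fail but the corrected ones hold), shows the computation by hand.

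The main obstacle is conceptual rather than technical: one must argue that a \emph{finite} check over $k \le 6$ genuinely establishes the stated equivalence, i.e.\ that no $t$ outside a controllable range can produce extra cohomology in the critical degree. This is handled by the standard observation that, for a fixed irreducible bundle $E_\lambda$ on a rational homogeneous space, the twists $\mathcal{O}(t)$ for which $H^\bullet(LG(k), E_\lambda(t)) \ne 0$ form a set whose complement is an interval of ``singular'' twists, and outside a bounded window the nonzero cohomology lives only in degree $0$ or in the top degree $\dim LG(k) = \binom{k+1}{2} + k$ — neither of which equals $\sum_q j_q$ in the allowed range (since $0 \le \sum j_q \le \sum_{q=1}^k (q+1) = \binom{k+1}{2} + k - 1 < \dim LG(k)$, and the degree-$0$ case is excluded because $\sum j_q \ge 1$ once the bundle is nontrivial, while the trivial-bundle case $\sum j_q = 0$ is the classical Kempf/Bott vanishing $H^i(\mathcal{O}(t)) = 0$ for $0 < i < \dim$). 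So only finitely many $(k, \lambda, t, i)$ need to be inspected, and the LiE computation disposes of them; the remainder of the argument is the routine cohomology-sequence and determinant bookkeeping already present in the proof of Proposition~\ref{sufficient}.
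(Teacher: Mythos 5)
Your proposal takes essentially the same route as the paper: the ``if'' direction is Proposition~\ref{sufficient} specialized to $n=2k+1$, and the ``only if'' direction is handled by reducing via additivity and twisting to line bundles, decomposing $\bigwedge^{j_k}Q^*\otimes\cdots\otimes\bigwedge^{j_1}Q^*(t)$ by the Pieri rule, and running Bott's algorithm on each irreducible summand in LiE exactly as laid out in Section~\ref{cohomology}, with your finiteness remark making explicit what the paper's loop bounds leave implicit. Note that your opening sentence mislabels the direction that Proposition~\ref{sufficient} supplies (you say ``only if'' but correctly identify it as the ``if'' direction in the same sentence), and there are two small arithmetic slips that happen to cancel: $\dim LG(k)=\binom{k+2}{2}=\binom{k+1}{2}+k+1$, and $\sum_{q=1}^k(q+1)=\binom{k+1}{2}+k$ without the $-1$.
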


\section{Cohomology of the isotropic Grassmannian}\label{cohomology}

This section is aimed at the reader who may not be familiar with the program LiE and its use for Lie algebra calculations.  Our goal is to give an idea of how we carried out our calculations which verify the cases of Question \ref{quest} leading to Theorem \ref{k6}.  These same calculations showed that in the case $k=7$ there is a counterexample to Question \ref{quest}.

Having determined sufficient splitting conditions for vector bundles over the isotropic Grassmannian in Proposition \ref{sufficient}, we need to check whether the required vanishing of cohomology actually occurs.  In this case we assume that $E\rightarrow IG(k,n)$ splits as a direct sum of line bundles.  Because cohomology is additive, we may assume that $E$ is a line bundle and (by changing the twist by $t$ if necessary) that the vector bundle 
\[\bigwedge^{j_k}{Q}\dual\otimes
\cdots \otimes\bigwedge^{j_1}{Q}\dual\otimes E(t)\]
is isomorphic to the homogeneous (non-reduced) vector bundle 
\[\bigwedge^{j_k}{Q}\dual\otimes
\cdots \otimes\bigwedge^{j_{1}}{Q}\dual(t).\]

A variety $X$ is said to be a homogeneous variety if it is of the form $X = G/P$ for $G$ a simply connected complex semisimple Lie group and $P$ a parabolic subgroup.
The isotropic Grassmannian $IG(k,n)$ is a homogeneous variety for the symplectic group ${Sp(n+1)}$. As mentioned above, the vector bundles we are reduced to studying are homogeneous vector bundles.
The main tool available to calculate cohomology of irreducible homogeneous vector bundles over homogeneous varieties is the theorem of Borel-Weil and the algorithm given by Bott's theorem. In order to state this theorem (see Section \ref{Bott}), we need to recall a bit of representation theory, which can be found in \cite{Fulton-Harris,Ott89} for example.  But before we can even use the Borel-Weil-Bott theorem, we need to decompose each vector bundle into its irreducible components.  In general we would use the Littlewood-Richardson formula, but because we are only dealing with wedge powers of the dual of the quotient bundle, we can use the simpler Pieri formula.  We discuss this computation in Section \ref{Pieri}.

For small examples, both the Bott algorithm and Pieri formula are easy to execute by hand, but in order to gather evidence for Question \ref{quest}, we automated our calculations in the (free) computational package LiE.  We discuss this computation in Section \ref{LiE}.

\subsection{A sketch of Bott's algorithm} \label{Bott}

Instead of trying to repeat a course on representation theory, we record the practical definitions of the objects we use, and refer the reader to the literature for the general case, see for example \cite{Baston-Eastwood}. Let $G$ be a simply connected complex semisimple Lie group and let $P$ be a parabolic subgroup so that $G/P$ is a rational homogeneous variety. A key point is the following fact: The category of homogeneous vector bundles $E$ over a homogeneous variety $G/P$ is equivalent to the category of $P$-modules.  Irreducible $P$-modules are indexed by discrete data, and this is the set of data we use for our computations. 

Here is a sketch of Bott's algorithm. Start with the data of a fixed semi-simple Lie group $G$ and parabolic subgroup $P$.  The input is a string of integers $w$ called a weight representing an irreducible vector bundle over $G/P$.  Bott's algorithm outputs either a weight representing the cohomology and the degree in which that cohomology occurs (note that the Borel-Weil theorem implies that cohomology of irreducible vector bundles only occurs in one degree) or it outputs $0$ if the cohomology is singular, \emph{i.e.} if the cohomology vanishes in all degrees. 

The execution of Bott's algorithm goes as follows.  The data of $G$ and $P$ has attached to it a set of integer vectors $R_{+}$ called positive roots and an inner product $\langle,\rangle$ that allows one to pair the roots with weights as well as a group of reflections $\mathcal{W}_{G}$ called the Weyl group which acts on the weights by reflection.  

The statement of the Borel-Weil-Bott theorem uses the \emph{affine} action of the Weyl group:  First consider the distinguished weight vector $\rho = (1,\dots,1)$.  Then the affine action is defined as $w.\lambda := w(\lambda+\rho)-\rho$.

Step 1: Compute the pairing $\langle w,\alpha \rangle$ for all $\alpha \in R_{+}$.
	If $\langle w,\alpha \rangle = 0$ for any $\alpha \in R_{+}$, the cohomology is singular.
	Otherwise, the number $d$ of $\alpha$'s in $R_{+}$ such that $\langle w,\alpha \rangle$ is negative is the degree in which the (non-zero) cohomology occurs.
	
Step 2: In the non-singualr case, there is an element $\omega \in \mathcal{W}_{G}$ (determined by Bott's algorithm) which is the product of $d$ simple reflections (generators of $\mathcal{W}_{G}$) and the output $\omega.w$ is the output cohomology.

For our purposes we only need the information from Step 1, however the way that we implemented Bott's algorithm in LiE it actually computes Step 2 and as a result also outputs the information for Step 1.  When working by hand, Step 1 is often easier to execute.
\subsection{The Borel-Weil-Bott Theorem}
Following is more detail about the specific objects we use in our computations.
The Lagrangian Grassmanninan $LG(k)$ is a homogeneous variety of the form $G/P_{k+1}$, with $G = Sp(2(k+1))$ and $P_{k+1}$ a maximal parabolic. One reason to focus on the Lagrangian case is because the reductive part of $P_{k+1}$ is $SL(k+1)$ so we can decompose the $P_{k+1}$-modules using the representation theory of $SL(k+1)$-modules and the representation theory of $SL(k+1)$ is easier to deal with.

The irreducible homogeneous vector bundles over $LG(k)$ and hence the irreducible $P_{k+1}$-modules are indexed by strings of integers (called \emph{weights}) of the form $\lambda = (\lambda_{1},\dots,\lambda_{k+1})$, where only $\lambda_{k+1}$ is allowed to be negative.

A weight is called \emph{$G$-dominant} if $\lambda_{i} \geq 0$ for all $i$. In the case of $LG(k)$, a weight is called $P_{k+1}$-dominant if $\lambda_{k+1}$ is any integer, and the rest of $\lambda_{i}$ for $i\neq k+1$ are non-negative integers.  Bott's algorithm takes an input of a $P$-dominant weight and (if the cohomology is non-singular) outputs a $G$-dominant weight.

The \emph{simple roots} of $Sp(2(k+1))$ are associated to the following  length $k+1$ strings of integers
\[\begin{array}{cccc}
\alpha_{1} = (1,0\dots,0) &
\alpha_{2} = (0,1,\dots,0) &
\ldots &
\alpha_{k+1} = (0,0\dots,1)
\end{array}
\]
Let $1\leq i \leq k+1$. The positive roots of $Sp(2(k+1))$ are 
\[
\begin{array}{lr}
\alpha_{i} & 2 \alpha_{k}+ \alpha_{k+1} \\
\alpha_{i} + \alpha_{i+1} & \alpha_{k-1} + 2 \alpha_{k}+ \alpha_{k+1} \\
\alpha_{i} + \alpha_{i+1} + \alpha_{i+2} & 2\alpha_{k-1} + 2 \alpha_{k}+ \alpha_{k+1} \\
& \alpha_{k-2}+ 2\alpha_{k-1} + 2 \alpha_{k}+ \alpha_{k+1} \\
\vdots & \vdots \\
\alpha_{1}+ \dots + \alpha_{k+1} & 2\alpha_{1} + \dots + 2\alpha_{k}+ \alpha_{k+1}
\end{array}
\]

The positive roots of $Sp(2(k+1))$ whose associated reflections can move a $P$-dominant weight closer to being $G$-dominant are those with a positive integer in the $(k+1)^{st}$ spot.
 A weight $\lambda = (\lambda_{1},\dots,\lambda_{k+1})$ and a root $a = (a_{1},\dots,a_{k+1})$ pair as
\[
\langle\lambda,a\rangle = \sum_{i=1}^{k} \lambda_{i}a_{i} + 2\lambda_{k+1}a_{k+1}
\]
 A weight $\lambda$ is called \emph{singular} if
$\langle \lambda , \alpha \rangle = 0$,
for some positive root $\alpha$, otherwise $\lambda$ is called \emph{non-singular}.

The reflections in the hyperplanes perpendicular to the roots of the Lie algebra form the \emph{Weyl group} $\mathcal{W}$. 
The Weyl group is generated by the \emph{simple reflections} (reflections perpendicular to the simple roots). 
  For a given element $w \in \mathcal{W}$, the \emph{length} of $w$, $l(w)$, is the minimum number of simple reflections over all expressions of $w$. 

 In the case that $\lambda$ is non-singular, one checks that a shortest $w\in \mathcal{W}$ such that $w(\lambda)$ is $G$-dominant is such that $\lambda(w)$ is also the number of positive roots which pair with $\lambda$ to give a negative value.

\begin{te}[Borel-Weil-Bott]
Let $G$ be a simply connected complex semi\-simple Lie group and $P\subset G$ a parabolic subgroup.
Let $Q^{\lambda}$ be a homogeneous vector bundle over $G/P$ associated to the $P$-module of highest weight $\lambda$.  If $\lambda$ is singular, then
\[
H^{i}(G/P,Q^{\lambda}) = 0 \;\; \text{for all }i
\]
If $\lambda$ is non-singular, let $w$ be a shortest word in the Weyl group $\mathcal{W}$ so that $w.\lambda$ is $G$-dominant. Then
\begin{align*}
H^{l(w)}(G/P,Q^{\lambda}) &= \Gamma^{w.\lambda} \\
H^{i}(G/P,Q^{\lambda}) &= 0 \;\; \text{for all }i \neq l(w)
,\end{align*}
where $\Gamma^{w.\lambda}$ is the $G$-module of highest weight $w.\lambda$, and $w.\lambda$ is the affine action of $w\in \mathcal{W}$ on $\lambda$.
\end{te}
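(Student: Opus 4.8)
This is the classical Borel--Weil--Bott theorem (Borel--Weil for the $H^{0}$ part, Bott for the rest), and the plan is to give the standard proof over $\cc$: descend from $G/P$ to the full flag variety $G/B$, and there run Demazure's induction over the $\pp^1$-fibrations attached to the simple roots. First I would reduce to the case $P = B$. Choosing a Borel $B \subseteq P$, the projection $\pi\colon G/B \to G/P$ has fibre $P/B$, the flag variety of the Levi factor $L$ of $P$. Since the $P$-module of highest weight $\lambda$ is by hypothesis $L$-dominant (this is exactly $P$-dominance of $\lambda$), Borel--Weil for $L$ shows that $\mathcal{L}_\lambda|_{P/B}$ has its space of sections concentrated in degree $0$; hence $R^q\pi_*\mathcal{L}_\lambda = 0$ for $q>0$, $\pi_*\mathcal{L}_\lambda \cong Q^\lambda$ by base change, and the Leray spectral sequence degenerates to give $H^i(G/P, Q^\lambda) \cong H^i(G/B, \mathcal{L}_\lambda)$ for all $i$. (Note that $P$-dominance forces $\langle\lambda+\rho,\beta^\vee\rangle>0$ for every root $\beta$ of $L$, so the singularity condition only ever involves roots outside $L$, consistently with what happens on $G/B$.) It thus suffices to prove the theorem for line bundles $\mathcal{L}_\lambda$ on $G/B$.

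Next I would establish the rank-one lemma. For each simple root $\alpha_i$ let $\pi_i\colon G/B \to G/P_i$ be the $\pp^1$-bundle over the minimal parabolic $P_i$, whose relative dualizing sheaf is $\mathcal{L}_{-\alpha_i}$. On a fibre, $\mathcal{L}_\lambda$ restricts to $\OO(d)$ with $d = \langle\lambda,\alpha_i^\vee\rangle$; computing cohomology on $\pp^1$ and applying cohomology-and-base-change gives three cases: if $d\ge 0$ then $R^1\pi_{i*}\mathcal{L}_\lambda=0$ and $\pi_{i*}\mathcal{L}_\lambda$ is locally free of rank $d+1$; if $d=-1$ then $R^\bullet\pi_{i*}\mathcal{L}_\lambda = 0$; if $d\le -2$ then $\pi_{i*}\mathcal{L}_\lambda=0$ and, via relative Serre duality together with the self-duality up to a character of $SL_2$-representations, $R^1\pi_{i*}\mathcal{L}_\lambda \cong \pi_{i*}\mathcal{L}_{s_i\cdot\lambda}$, where $s_i\cdot\lambda$ is the affine action. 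In each case one of $R^0\pi_{i*}$, $R^1\pi_{i*}$ vanishes, so the Leray spectral sequence of $\pi_i$ collapses, and on $G/B$ we deduce $H^\bullet(\mathcal{L}_\lambda)=0$ when $\langle\lambda,\alpha_i^\vee\rangle=-1$, and $H^j(\mathcal{L}_\lambda)\cong H^{j-1}(\mathcal{L}_{s_i\cdot\lambda})$ when $\langle\lambda,\alpha_i^\vee\rangle\le -2$.

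Then I would run the induction. Given $\lambda$ on $G/B$, let $N(\lambda)$ be the number of positive roots $\beta$ with $\langle\lambda+\rho,\beta^\vee\rangle<0$. If some simple $\alpha_i$ has $\langle\lambda+\rho,\alpha_i^\vee\rangle = 0$, then $\lambda$ is singular, $\langle\lambda,\alpha_i^\vee\rangle=-1$, and the rank-one lemma gives $H^\bullet(G/B,\mathcal{L}_\lambda)=0$. Otherwise, whenever some simple $\alpha_i$ has $\langle\lambda+\rho,\alpha_i^\vee\rangle<0$ (hence $\langle\lambda,\alpha_i^\vee\rangle\le -2$), replace $\lambda$ by $s_i\cdot\lambda$; by the standard inversion-count combinatorics of the affine Weyl action, $N(s_i\cdot\lambda)=N(\lambda)-1$, so the process terminates. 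It stops either at a weight with some $\langle\,\cdot+\rho,\alpha_i^\vee\rangle=0$ — in which case the rank-one lemma gives cohomology $0$ there, and chasing the degree shifts of Step~2 back down the chain gives $H^\bullet(G/B,\mathcal{L}_\lambda)=0$ — or at a dominant weight $\mu=w\cdot\lambda$ reached in $l=N(\lambda)$ steps, where $w$ is the product of the simple reflections used (automatically reduced since $N$ drops by exactly one at each step, so $w$ is a shortest element carrying $\lambda$ into the dominant chamber and $l(w)=l$). In this case Borel--Weil gives $H^0(G/B,\mathcal{L}_\mu)=\Gamma^{w\cdot\lambda}$, and Kodaira vanishing applies because $\mathcal{L}_\mu\cong\mathcal{L}_{\mu+2\rho}\otimes K_{G/B}$ with $\mathcal{L}_{\mu+2\rho}$ ample, giving $H^{>0}(G/B,\mathcal{L}_\mu)=0$; chasing the isomorphisms of Step~2 back up yields $H^{l(w)}(G/B,\mathcal{L}_\lambda)=\Gamma^{w\cdot\lambda}$ and $H^j(G/B,\mathcal{L}_\lambda)=0$ for $j\ne l(w)$. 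Since a dominant weight has $\mu+\rho$ regular, these two alternatives are precisely the non-singular and singular cases of $\lambda$, which matches the statement.

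The main obstacle is the $d\le -2$ clause of the rank-one lemma, namely the identification $R^1\pi_{i*}\mathcal{L}_\lambda\cong\pi_{i*}\mathcal{L}_{s_i\cdot\lambda}$: this requires relative Serre duality on the $\pp^1$-bundle $\pi_i$ combined with a careful bookkeeping of the fibrewise $SL_2$-weights — which is exactly where the $\rho$-shift in the affine action is produced — together with a base-change argument guaranteeing that the higher direct images are locally free of the expected rank. The remaining ingredients are either routine (degeneration of the two Leray spectral sequences, since in every relevant case only one direct image survives; termination and reducedness of the straightening process, which is standard Weyl-group combinatorics) or classical facts I would simply cite, namely the $H^0$ statement of Borel--Weil and Kodaira--Nakano vanishing; alternatively one can bootstrap both of these from the same $\pp^1$-fibration machinery applied to the Levi subgroups, inducting on the semisimple rank of $G$.
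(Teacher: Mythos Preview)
The paper does not prove this theorem: it is stated as the classical Borel--Weil--Bott theorem, with references to \cite{Baston-Eastwood} and \cite{Fulton-Harris}, and is invoked only as a computational tool (``This theorem is implemented via Bott's algorithm in the program LiE''). So there is no in-paper proof to compare against.

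Your proposal is the standard Demazure proof and is correct. The reduction from $G/P$ to $G/B$ via the Leray spectral sequence for $\pi\colon G/B\to G/P$ is the right first move, and your observation that $P$-dominance of $\lambda$ makes $\lambda+\rho$ regular with respect to the Levi roots is exactly what is needed to ensure the fibrewise cohomology is concentrated in degree~$0$. The rank-one lemma and the induction on $N(\lambda)$ are the heart of Demazure's argument, and you have the bookkeeping right: the $\rho$-shift in the affine action arises from the relative dualizing sheaf $\mathcal{L}_{-\alpha_i}$ in relative Serre duality, and the drop $N(s_i\cdot\lambda)=N(\lambda)-1$ is the standard inversion count that makes the resulting word reduced. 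Your appeal to Kodaira vanishing for the dominant base case (via $K_{G/B}=\mathcal{L}_{-2\rho}$ and ampleness of $\mathcal{L}_{\mu+2\rho}$) is a clean way to close the argument, though as you note one can also bootstrap this from the same $\pp^1$-machinery by induction on the rank.

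One minor remark: the paper's phrasing of singularity (``$\langle\lambda,\alpha\rangle=0$ for some positive root $\alpha$'') is slightly loose compared to the standard condition $\langle\lambda+\rho,\alpha^\vee\rangle=0$ that you use; your version is the correct one and is in any case what their Bott-algorithm description and their LiE script actually implement.
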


This theorem is implemented via Bott's algorithm in the program LiE. In light of Proposition \ref{sufficient},  we need to consider the case that the vector bundle $E$ splits and it remains to show the vanishing of 
\[H^i\left(\bigwedge^{j_k}{Q_k}\dual\otimes\cdots \otimes\bigwedge^{j_1}{Q_{1}}\dual(t)\right),\] 
for the appropriate index ranges stated in the theorem.
\subsection{Pieri's decomposition formula}\label{Pieri}

Bott's algorithm deals with irreducible vector bundles, but the vector bundles in Question \ref{quest} are not in general irreducible.  We need to decompose each bundle into its irreducible pieces and apply the Bott algorithm to those pieces.

In the case of the Lagrangian Grassmannian $LG(k)$, the reductive part of the parabolic $P_{k+1}$ is $SL(k+1)$. Therefore the quotient bundle and its exterior powers can be associated to representations of $SL(k+1)$, so from a representation theory standpoint, they are easier to deal with.

The vector bundles in the statement of Question \ref{quest} are all of the form 
$\bigwedge^{j_k}{Q}\dual\otimes\cdots 
\otimes\bigwedge^{j_1}{Q}\dual\otimes E(t)$.  Since we are doing calculations in the case that the vector bundle $E$ splits, we can just consider the vector bundle
$\bigwedge^{j_k}{Q}\dual\otimes\cdots 
\otimes\bigwedge^{j_1}{Q}\dual(t)$ -- note that this vector bundle is homogeneous and (in general) decomposable.

As mentioned in the beginning of this section, we need to decompose
the vector bundles of the form
$\bigwedge^{j_k}{Q}\dual\otimes\cdots 
\otimes\bigwedge^{j_1}{Q}\dual(t)$ into irreducible components.  We can accomplish this by decomposing the associated $P$-modules.  And (as mentioned above) because we are specializing to the Lagrangian Grassmannian case, we can work with representations of $SL(k+1)$.
  Let $F$ be the $P$-module associated to $Q\dual$. 
  
  Recall the Pieri formula for decomposing the tensor product of a representation $F^{\pi}$ indexed by the partition $\pi$ and $\bigwedge^{j}F$,
  \[
  F^{\pi} \otimes \bigwedge^{j}F = \bigoplus_{\lambda \sim}F^{\lambda}
  ,\]
  where $\lambda\sim$ is to indicate that the partitions $\lambda$ are constructed as Young diagrams from the Young diagram of $\pi$ by adding $j$ boxes, no two in the same row.
   We can apply the Pieri formula iteratively to decompose the tensor product:
\[
(\bigwedge^{k+1}F)^{\otimes t} \otimes
\bigwedge^{j_k}{F}\otimes\cdots 
\otimes\bigwedge^{j_1}{F} = \bigoplus_{\lambda\in B} F^{\lambda_{1},\dots,\lambda_{k+1}}
,\] 
where the condition $\lambda \in B$ means that $\lambda = (\lambda_{1},\dots, \lambda_{k+1})$ is a partition which can be constructed iteratively (via the Pieri formula) from the partitions $1^{j_{1}},\dots , 1^{j_{k}}$ and $t$ copies of the partition $(1^{k+1})$, where the notation $1^{p}$ indicates the partition $(1,\dots,1)$ with $1$ repeated $p$ times. In particular, the representation $F^{(j_{1},\dots,j_{k})'}$ occurs in the decomposition, where $\lambda'$ is the conjugate partition to $\lambda$.
This immediately implies that the cohomology 
\[H^{d}(IG(k,2k+1),\bigwedge^{j_k}{Q}\dual\otimes\cdots \otimes\bigwedge^{j_1}{Q}\dual(t))\]
 does not vanish if 
 \[H^{d}(IG(k,2k+1),(Q^{(j_{1},\dots,j_{k})'})\dual(t))\]
  does not vanish, where $(Q^{(j_{1},\dots,j_{k})'})\dual$ is the irreducible vector bundle associated to the irreducible $P$-module $F^{(j_{1},\dots,j_{k})'}$.

Notice that since $F$ has dimension $k+1$, we have an isomorphism 
$F^{\lambda}(t)\otimes \bigwedge^{k+1}F \simeq F^{\lambda}(t+1)$.  Since we are going to require vanishing for all twists by line bundles, we can just focus on the irreducible modules that occur in the decomposition up to isomorphism, and then consider all twists afterwards.

The index ranges that we need to consider are all $j_{1},\dots,j_{k}$ such that
$0\le j_q\le k-q+2$, and we need to consider cohomology which occurs in degree $d = \sum_{q=1}^kj_q$.
%
%
However, we also need to consider two possible alternatives which could force us to consider cohomology in degree $d$ where $d \neq \sum_{q=1}^kj_q$.  One, that the decomposition of $\bigwedge^{j_{k}}Q^{*}\otimes\dots\otimes \bigwedge^{1}Q$ contains a representation indexed by a partition $\lambda$ that has at least $k+1$ parts. Two, that the bundle $\bigwedge^{j_k}{Q}\dual\otimes\cdots \otimes\bigwedge^{j_1}{Q}\dual(t)$ has least one $j_{q}= q+1$, in which case we would have replaced $\bigwedge^{j_q}{Q}\dual(t)$ by $\OO(t')$ for some other integer $t'$.

The first case is already handled by our script because of the following: LiE accepts the partition $\lambda$ and converts it to a weight vector $wt(\lambda)$.  If $\lambda$ has $k+1$ parts, then the $k+1^{st}$ entry in the vector $wt(\lambda)$ will be nonzero.  When we twist the bundle $Q^{\lambda}$ by $\OO(t)$, this is adds $t$ to the $k+1^{st}$ entry in $wt(\lambda)$, and this is no different if the $k+1^{st}$ entry in $wt(\lambda)$ is zero or nonzero.

In the second case, suppose we want to verify the vanishing of 
\[H^{d}\left(LG(k),\bigwedge^{j_k}{Q}\dual\otimes\cdots \otimes\bigwedge^{j_1}{Q}\dual(t)\right)\]
with $d = \sum_{q=1}^{k}j_{q}$
in the case that $j_{q} = q+1$ for some $q$.  This means that we need to verify for the same $d$, the vanishing of 
\[H^{d}\left(LG(k),\bigwedge^{j_k}{Q}\dual\otimes\cdots \otimes \widehat{\bigwedge^{j_q}{Q}\dual}\otimes\cdots \otimes\bigwedge^{j_1}{Q}\dual(t)\right),\]
where $\widehat{\bigwedge^{j_q}{Q}\dual}$ indicates omission.  We handle this case with an ``if'' statement at the last step of each loop in our script.

In the next subsection we describe our LiE scripts which test Question \ref{quest} leading to Theorem \ref{k6} in the cases $k=1 \ldots 6$ and provide our counter examples in the case $k=7$.

\subsection{LiE implementations}\label{LiE}
LiE \cite{LiE, LiE1992} is a computational package that allows us to compute the cohomology of the vector bundles in which we are interested.  In particular, we implement Bott's Algorithm to compute cohomology on vector bundles we constructed via iterative uses the Littlewood-Richardson or Pieri rule.  Herein we describe the scripts we wrote to accomplish these tasks.

The ``test'' script takes a partition and outputs the possible cohomology for each possible twist that could yield cohomology, printing a warning if there is any cohomology in the forbidden degree.  For each new $k$, we have to change $k$ and the group that LiE uses as default by hand.  The script tests each partition for intermediate cohomology and outputs the possible degrees for non-zero cohomology.

Here is our script for the case $k=3$, and the Lagrangian Grassmannian $LG(3)$.
\begin{verbatim}
test(vec w){
v = from_part(w);
k=3;
degrees=null(0);
setdefault(C4);
rho = all_one(k+1);
anything = 0;
for t=-2 to  3*(k+1) do
 CH=dominant(v+null(k)^[-t]+rho)-rho;
 myword = W_word(v+null(k)^[-t]+rho);
 ll = length(myword);
 mybool=0;
  for j = 1 to k+1 do 
   if CH[j] == -1 then mybool = 1; fi;
  od;
 if mybool ==0   && ll !=0 && ll != (k+1)*(k+2)/2 then 
degrees = degrees^[ll];
 fi;
 od;
degrees
}
\end{verbatim}

Next we have a script that decomposes each vector bundle and feeds the script ``test'' each irreducible component.  This uses the Littlewood-Richardson rule implemented in LiE.  We have included the case that whenever an index $j_{q} = q+1$ then we set the corresponding representation equal to the trivial representation - this is equivalent to removing that factor and replacing it with a line bundle.  Below is the example when $k=3$.
\begin{verbatim}
m=5
sum(vec v) = v*all_one(size(v))
for j1=0 to 2 do for j2=0 to 3 do for j3 =0 to 4 do
 v1 = all_one(j1)^null(m-j1); if j1==2 then v1 = null(m) fi; 
 v2 = all_one(j2)^null(m-j2); if j2==3 then v2 = null(m) fi;
 v3 = all_one(j3)^null(m-j3); if j3==4 then v3 = null(m) fi;
 t = LR_tensor(X v2,X v1); 
 t = LR_tensor(X v3,t);
 for i=1 to length(t) do
  degs = test(expon(t,i));
  for j = 1 to size(degs) do
  if degs[j] == sum([j3,j2,j1]) then 
   print("WE HAVE A PROBLEM"); print(t[i]);
   print("has cohomology in degree(s)");print(degs[j]); 
  else 
   print("ALL CLEAR"); fi;
  od;	
 od;
od;od;od
\end{verbatim}
We ran the same script, modified for the next cases, and found cohomology in a degree forbidden by the sufficient conditions of Question \ref{quest} for the first time at $k=7$.
The following vector bundles have non-zero cohomology in degree $24$, which was required to be zero in the question.
\[\bigwedge^6 Q^* \otimes  \bigwedge^5 Q^* \otimes  \bigwedge^4 Q^* \otimes  \bigwedge^3 Q^* \otimes  \bigwedge^3 Q^* \otimes  \bigwedge^2 Q^* \otimes  \bigwedge^1 Q^* \supseteq
     Q^{7,6,5,3,2,1,0}\]
\[\bigwedge^5 Q^* \otimes  \bigwedge^5 Q^* \otimes  \bigwedge^5 Q^* \otimes  \bigwedge^3 Q^* \otimes  \bigwedge^3 Q^* \otimes  \bigwedge^2 Q^* \otimes  \bigwedge^1 Q^* \supseteq
     Q^{6,6,6,3,3,0,0}\]
\[\bigwedge^5 Q^* \otimes  \bigwedge^5 Q^* \otimes  \bigwedge^5 Q^* \otimes  \bigwedge^3 Q^* \otimes  \bigwedge^3 Q^* \otimes  \bigwedge^2 Q^* \otimes  \bigwedge^1 Q^* \supseteq
     Q^{7,6,5,3,2,1,0}\]
\[\bigwedge^6 Q^* \otimes  \bigwedge^4 Q^* \otimes  \bigwedge^4 Q^* \otimes  \bigwedge^4 Q^* \otimes  \bigwedge^3 Q^* \otimes  \bigwedge^2 Q^* \otimes  \bigwedge^1 Q^* \supseteq
     Q^{7,5,5,5,1,1,0}\]
\[\bigwedge^6 Q^* \otimes  \bigwedge^4 Q^* \otimes  \bigwedge^4 Q^* \otimes  \bigwedge^4 Q^* \otimes  \bigwedge^3 Q^* \otimes  \bigwedge^2 Q^* \otimes  \bigwedge^1 Q^* \supseteq
     Q^{7,6,5,3,2,1,0}\]
\[\bigwedge^5 Q^* \otimes  \bigwedge^5 Q^* \otimes  \bigwedge^4 Q^* \otimes  \bigwedge^4 Q^* \otimes  \bigwedge^3 Q^* \otimes  \bigwedge^2 Q^* \otimes  \bigwedge^1 Q^* \supseteq
     Q^{6,6,5,5,2,0,0}\]
\[\bigwedge^5 Q^* \otimes  \bigwedge^5 Q^* \otimes  \bigwedge^4 Q^* \otimes  \bigwedge^4 Q^* \otimes  \bigwedge^3 Q^* \otimes  \bigwedge^2 Q^* \otimes  \bigwedge^1 Q^* \supseteq
     Q^{6,6,6,3,3,0,0}\]
\[\bigwedge^5 Q^* \otimes  \bigwedge^5 Q^* \otimes  \bigwedge^4 Q^* \otimes  \bigwedge^4 Q^* \otimes  \bigwedge^3 Q^* \otimes  \bigwedge^2 Q^* \otimes  \bigwedge^1 Q^* \supseteq
     Q^{7,5,5,5,1,1,0}\]
\[\bigwedge^5 Q^* \otimes  \bigwedge^5 Q^* \otimes  \bigwedge^4 Q^* \otimes  \bigwedge^4 Q^* \otimes  \bigwedge^3 Q^* \otimes  \bigwedge^2 Q^* \otimes  \bigwedge^1 Q^* \supseteq
     Q^{7,6,5,3,2,1,0}\]
  



\section{Another set of splitting conditions}\label{anotherset}
Instead of trying to prove splitting conditions by verifying complicated cohomology conditions at each stage, suppose we restrict our given bundle to $\pp^{1}$ in every case.  We use this idea to arrive at the following set of equivalent conditions for a vector bundle over the Lagrangian Grassmannian to be a splitting bundle.

\begin{te}\label{another}
Let $E$ be a vector bundle on the Lagrangian Grassmanian $LG(k)$ with $k\geq 1$, Let $Q^{(k)}$ denote the quotient bundle on $LG(k)$,  and let $\II_{LG(k-1)}$ denote the ideal sheaf associated to the tautological sequence 
\[
\xymatrix{0\ar[r]&\II_{LG(k-1)} \ar[r]& \OO_{LG(k)}\ar[r]& \OO_{LG(k-1)}\ar[r]& 0}
\]
The following are equivalent:
\begin{enumerate}
\item $E$ splits as a sum of line bundles.
\item There exists a a chain of smooth subvarieties $LG(0) \subset LG(1) \subset\ldots \subset LG(k)$ such that
\[H^j\left(L(i),\bigwedge^{j}(Q^{(i)})^{*} \otimes E_{|LG(i)}(t)\right)=0,\]
for all $t\in\zz$ and all $1\leq i \leq k$ and for all $1\leq j \leq i+1$.
\item For every chain of smooth subvarieties $LG(0) \subset LG(1) \subset\ldots \subset LG(k)$ we have
\[H^j\left(L(i),\bigwedge^{j}(Q^{(i)})^{*} \otimes E_{|LG(i)}(t)\right)=0,\]
for all $t\in\zz$ and all $1\leq i \leq k$ and for all $1\leq j \leq i+1$.
\end{enumerate}
\end{te}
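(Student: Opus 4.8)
The plan is to prove the cycle of implications $(1)\Rightarrow(3)\Rightarrow(2)\Rightarrow(1)$. The implication $(3)\Rightarrow(2)$ is immediate once we know that at least one chain of smooth Lagrangian subvarieties exists, which follows from the construction in Remark~\ref{globsectquo}: choosing a nonzero global section of the quotient bundle at each stage produces a flag $LG(0)\subset LG(1)\subset\cdots\subset LG(k)$ with $LG(i-1)$ cut out in $LG(i)$ by the zero locus of a section $s_i$ of $Q^{(i)}$. For $(1)\Rightarrow(3)$, if $E$ splits then so does each restriction $E_{|LG(i)}$, so it suffices to check the vanishing $H^j\big(LG(i),\bigwedge^j(Q^{(i)})^*(t)\big)=0$ for a line bundle, for $1\le j\le i+1$. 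This is a Borel--Weil--Bott computation of the type carried out in Section~\ref{cohomology}: the bundle $\bigwedge^j(Q^{(i)})^*$ is irreducible, indexed by the partition $1^j$ (a single column of length $j\le i+1$), and one checks directly using Bott's algorithm on $Sp(2(i+1))$ that for any twist the associated weight is either singular or has its nonzero cohomology in a degree different from $j$ when $j\ge 1$. This is a finite, uniform-in-$i$ calculation, essentially the $j_1=\cdots=j_{i-1}=0$, $j_i=j$ case of what the LiE scripts already verify.

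The substantive implication is $(2)\Rightarrow(1)$, and here I would argue by induction on $k$, following the structure of the proof of Proposition~\ref{sufficient}. For $k=0$ there is nothing to prove (or one starts the induction at $k=1$, restricting to $\pp^1=LG(1)$... actually $LG(1)=\pp^1\times$-type... one restricts to the rational curve and applies Grothendieck's splitting theorem on $\pp^1$, noting that $LG(1)$ is itself a projective space of dimension $2$ so Horrocks applies, with the single condition $H^1(LG(1),Q^*(t))=0$ plus $H^2(LG(1),\bigwedge^2 Q^*(t))=0$; the latter involves a line bundle so is automatic, and the former is exactly Horrocks' intermediate-cohomology hypothesis). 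For the inductive step, suppose $(2)$ holds on $LG(k)$ with its chain, and set $Z=LG(k-1)$, cut out by a section $s$ of $Q^{(k)}$. First I would show that the hypotheses of $(2)$ restricted to the sub-chain $LG(0)\subset\cdots\subset LG(k-1)$ are satisfied by $E_{|Z}$; since those conditions only involve $LG(i)$ for $i\le k-1$ and the bundles $\bigwedge^j(Q^{(i)})^*\otimes E(t)$ there, they are literally part of the hypothesis already, so $E_{|Z}$ splits by induction. Then, exactly as in Proposition~\ref{sufficient}, I would choose a splitting bundle $F$ on $LG(k)$ with $F_{|Z}\cong E_{|Z}$, and lift the isomorphism $\alpha_0\in H^0(Z,(F^*\otimes E)_{|Z})$ to $\alpha\in H^0(LG(k),F^*\otimes E)$ by proving $H^1(\II_Z\otimes F^*\otimes E)=0$ via the Koszul complex of $s$ and Lemma~1.1 of \cite{Ott89}; the terms of that complex are $\bigwedge^p(Q^{(k)})^*\otimes F^*\otimes E$ and since $F^*\otimes E$ is a sum of line bundles, the vanishing needed reduces to $H^p\big(LG(k),\bigwedge^p(Q^{(k)})^*(t)\big)=0$ for $1\le p\le k+1$ and all $t$, which is precisely the $i=k$ instance of the hypothesis in $(2)$. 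Finally $\det\alpha\in H^0(\OO_{LG(k)})\cong\cc$ is a constant, nonzero on $Z$, hence nonzero everywhere, so $\alpha$ is an isomorphism and $E$ splits.

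The main obstacle I anticipate is verifying cleanly that the Koszul-complex argument only requires the single-column vanishing statements $H^p(LG(k),\bigwedge^p(Q^{(k)})^*(t))=0$ for $1\le p\le k+1$ — one must be careful that the top term $\bigwedge^{k+1}(Q^{(k)})^*$ is a line bundle (so its $H^{k+1}$ is generally nonzero, but it contributes to the Koszul complex in a spot where a different, lower-degree cohomology group is needed, and Lemma~1.1 of \cite{Ott89} with $r=1$ asks only for $H^{1+i}(A_i)$) and to check the index bookkeeping in Lemma~1.1 goes through with $r=1$. A second, milder point is confirming the Borel--Weil--Bott computation for $(1)\Rightarrow(3)$ uniformly in $i$ and $t$: one should record that a single column $1^j$ with $1\le j\le i+1$, twisted arbitrarily, never produces cohomology in degree exactly $j$, which can be done by hand from the description of positive roots of $Sp(2(i+1))$ given in Section~\ref{Bott}, or simply cited from the LiE verification. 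Neither of these is deep, but the argument's correctness hinges on getting the index ranges to match exactly, which is why the notational setup "$1\le j\le i+1$" (rather than $1\le j\le i$) is exactly what the Koszul complex forces.
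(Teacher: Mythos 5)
Your proposal follows the same three-part structure as the paper's proof and all the key ideas match: the Bott computation for $(1)\Rightarrow(3)$, the existence of a chain via iterated zero loci of sections of $Q$ for $(3)\Rightarrow(2)$, and the lifting argument using the Koszul complex and Lemma~1.1 of \cite{Ott89} for $(2)\Rightarrow(1)$. The only genuine difference is cosmetic: you phrase $(2)\Rightarrow(1)$ as an induction on $k$ (with $E_{|LG(k-1)}$ inheriting the hypotheses of $(2)$ and hence splitting by the inductive hypothesis), whereas the paper descends step by step to $LG(0)=\pp^1$, observes that everything splits there by Grothendieck, and then climbs back up; these are the same argument, and the inductive packaging is arguably cleaner. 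Your index bookkeeping in the Koszul step — that Lemma~1.1 with $r=1$ requires exactly $H^{p}\bigl(\bigwedge^{p}(Q^{(k)})^{*}\otimes E(t)\bigr)=0$ for $1\le p\le k+1$, which is the $i=k$ instance of $(2)$ — is correct, and your remark about why the range is $1\le j\le i+1$ rather than $1\le j\le i$ is precisely the right point to check.

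One factual error in the parenthetical aside, which you should fix but which does not affect the argument: $LG(1)$ is not $\pp^2$ (nor $\pp^1$); it is the three-dimensional variety $IG(1,3)$, a hyperplane section of $Gr(1,3)$, hence a quadric threefold. The base of the chain is $LG(0)=\pp^1$, and that is where Grothendieck's splitting theorem is applied, as the paper does. If you state the induction as starting at $LG(0)$ with Grothendieck rather than trying to invoke Horrocks on $LG(1)$, the confusion disappears. Also, your $(1)\Rightarrow(3)$ sketch concludes only that the cohomology of $\bigwedge^{j}(Q^{(i)})^{*}(t)$ does not land in degree $j$; the paper's root pairing actually gives the stronger statement $H^{d}\bigl(\bigwedge^{j}(Q^{(i)})^{*}(t)\bigr)=0$ for all $1\le d\le 2i+2$. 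Your weaker claim still suffices, but it is worth recording the stronger vanishing since it comes for free from the same calculation.
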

\begin{proof}
$(1) \implies (3)$: 
Suppose $E$ splits and consider one such chain of subvarieties. To prove vanishing of cohomology, we apply Bott's theorem.

We must calculate the cohomology $\bigwedge^{i}Q^{*}(t)$.  This means we need to pair the weight $\lambda_{i} + t\lambda_{k+1} \rho$ with all the positive roots, and count the number of pairings that are negative and we must show that there cannot be $i$ such positive roots.  Because only the parameter $t$ is allowed to be negative, the only positive roots which have the possibility to pair negatively with $\lambda_{i} + t\lambda_{k+1} \rho$ are those including $\alpha_{k+1}$.  

For the first examples, suppose that $i < k $.  When $i=k$ the pairings will be slightly different, but the essential argument is the same. We compute the first pairings in non-decreasing order as follows:
\[
\begin{array}{rc
}
\underline{\hspace{2em}\alpha \hspace{2em}} & \underline{\langle \alpha, \lambda_{i}+ t\lambda_{k+1} + \rho \rangle }\\
\alpha_{k+1} &  2t+2\\
  \alpha_{k}+ \alpha_{k+1} & 2t+3\\
  \alpha_{k-1}+ \alpha_{k}+ \alpha_{k+1} & 2t+4 \\
  2\alpha_{k}+ \alpha_{k+1} & 2t+4 \\
 \alpha_{k-2}+ \alpha_{k-1} +  \alpha_{k}+ \alpha_{k+1} &2t+5\\
 \alpha_{k-1} + 2 \alpha_{k}+ \alpha_{k+1} &2t+5\\
  \alpha_{k-3} +  \alpha_{k-2} +  \alpha_{k-1} +  \alpha_{k}+ \alpha_{k+1} &2t+6\\
\alpha_{k-2} +  \alpha_{k-1} +  2\alpha_{k}+ \alpha_{k+1} &2t+6\\
2\alpha_{k-1} +  2\alpha_{k}+ \alpha_{k+1} &2t+6\\
\vdots & \vdots \\
\end{array}
\]
So, to have $H^{1}(\bigwedge^{1}Q^{*}(t)) \neq 0$ we would need to have precisely one negative pairing and no zero pairings.  The pairing with $\alpha_{k+1}$ yields $ 2t+2 <0$  implying that $t<-1$.  But if $t\leq -2$ then the pairing with $\alpha_{k}+ \alpha_{k+1}$ yields $2t+3 \leq -1$, so $H^{1}(\bigwedge^{1}Q^{*}(t)) = 0$, and the nonzero cohomology can only occur in degree at least $2$.

Similarly, to have $H^{2}(\bigwedge^{2}Q^{*}(t)) \neq 0$ we need precisely two negative pairings, and no zero pairings. So we must have $2t+3 <0$ or $t\leq -2$.  If $t=-2$ then we get a zero pairing.  If $t <-2$ then we would have more than two negative pairings.

In general, one checks that for each integer $i \leq k+1$, there is at least one positive root $\alpha$ such that $\langle\alpha, \lambda_{i}+t \lambda_{k+1}+\rho \rangle = 2t + j$ for all $2 \leq j \leq 2k+2$.  So to require at least $i$ negative pairings would also imply either that the cohomology is singular, or that there are strictly more than $i$ negative pairings.  Therefore the cohomology of $\bigwedge^{i}Q^{*}(t)$ is either singular, or occurs in degree greater than $i$.  Since $Q$ has rank $k+1$ we conclude more than we needed to show, namely that $H^{d}(\bigwedge^{i}Q^{*}(t)) = 0$ in the range $1\leq d \leq 2k+2$.

$(3) \implies (2)$: The existence of such a chain of subvarieties is constructed as in the proof of Theorem \ref{sufficient}: Consider a generic section $s$ of the quotient bundle $Q^{(k)}$ on $LG(k)$.  Then we showed that $zeros(s) = LG(k-1)$. Iterate.

$(2) \implies (1)$:
Consider the restriction $E_{|L(k-1)}$.  We do not know if this vector bundle splits or not.  If $E_{|L(k-1)}$ splits it is isomorphic to a bundle $\bigoplus_{j}\OO_{L(k-1)}(a_{j})$ for some constants $a_{j}$. Let $F = \bigoplus_{j} \OO_{LG(k)}(a_{j})$ so that $F_{|L(k-1)} \cong
E_{|L(k-1)}$.  By the same argument as in the proof of Theorem \ref{sufficient}, we know that the isomorphism between $E$ and $F$ on $L(k-1)$ lifts if $H^{1}(\II_{L(k-1)} \otimes F^{*} \otimes E) =0$.  By applying the Koszul resolution and Lemma 1.1 of \cite{Ott89}, this vanishing can be guaranteed if $H^{j}(L(k-1),\bigwedge^{j}Q^{(k)}\otimes E _{LG(k-1)}(t)) =0$ for all $t\in \zz$ and for all $1\leq j\leq k+1$.

In a similar manner, we can consider the restriction $(E_{|L(k-1)})_{|L(k-2)}$.  If this bundle splits, then we will need to ask for the vanishing of $H^{j}(L(k-2),\bigwedge^{j}Q^{(k-1)}\otimes E_{LG(k-2)} (t)) =0$, $t\in \zz$ and for all $j\leq k$ in order to guarantee that the isomorphism between the splitting bundle and $E$ at the level of $LG(k-2)$ lifts.  
We continue to descend until we get to $LG(0)= \pp^{1}$.  At this base level, we know that every vector bundle splits over $\pp^{1}$, so we require $H^{j}(\pp^{1},\bigwedge^{j}Q^{(0)}\otimes E _{\pp^{1}}(t)) =0$, $t\in \zz$ and for all $1\leq j\leq 2$.
\end{proof}


\textbf{Acknowledgements:}
The authors are grateful for the support of PRAGMATIC and the University of Catania. They wish to thank Giorgio Otavianni and Daniele Faenzi for the fruitful discussions that led to this work.

\bibliographystyle{amsalpha}
\bibliography{geoalg}

\end{document}